\DeclareMathAlphabet{\mathcal}{OMS}{cmsy}{m}{n} 
\newtheorem*{theorem}{Theorem}
\newtheorem*{lemma}{Lemma}
\newtheorem{corollary}{Corollary}
\def\liebrack  {\ensuremath{[\,\cdot\, , \cdot\,]}} 
\def\curlybrack{\ensuremath{\{\,\cdot\, , \cdot\,\}}}
\def\form      {\ensuremath{(\,\cdot\, , \cdot\,)}}
\def\bform     {\ensuremath{\Lparen\,\cdot\, , \cdot\,\Rparen}}
\newcommand{\set}[2]{\ensuremath{\{ #1 \>|\> #2 \}}}
\begin{document}
\title[Invariant forms on generalized Jacobson--Witt algebras]
{Nonexistence of invariant symmetric forms on generalized Jacobson--Witt algebras 
revisited}
\author{Pasha Zusmanovich}
\address{Reykjav\'ik Academy, Iceland}
\email{pasha.zusmanovich@gmail.com}
\date{last minor revision June 6, 2018}
\thanks{Comm. Algebra \textbf{39} (2011), N2, 548--554; arXiv:0902.0038}

\begin{abstract}
We provide a short alternative homological argument showing that any
invariant symmetric bilinear form on simple modular generalized Jacobson--Witt algebras
vanishes, and outline another, deformation-theoretic one, allowing to describe such
forms on simple modular Lie algebras of contact type.
\end{abstract}

\maketitle

Recall that a symmetric bilinear form $\omega: L \times L \to K$ on a Lie algebra $L$ 
over a field $K$ is called \textit{invariant} if 
$$
\omega ([z,x],y) + \omega (x,[z,y]) = 0
$$
for any $x,y,z \in L$.
The linear space of all such forms is an important invariant of a Lie algebra.
For simple finite-dimensional modular Lie algebras of Cartan type, 
the description of this invariant was announced without proof in \cite{dzhu} and 
then elaborated in \cite[\S2]{dzhu-izv}, and independently in 
\cite{farnsteiner} and \cite[\S 4.6]{fs}. All these proofs are based on more or less direct computations,
employing the graded structure of the underlying Lie algebra and the Poincar\'e--Birkhoff--Witt theorem.

Here we propose a very short alternative proof for one of the four series of Lie algebras
of Cartan type -- namely, for generalized Jacobson--Witt algebras $W_n(\overline m)$ (also called
Lie algebras of the general Cartan type in the earlier literature, including the 
foundational paper \cite{kostrikin-sh}).
The proof is based on the evaluation of the second homology of a certain Lie algebra
in two ways, one of them involves the space of invariant symmetric bilinear forms
in question, and allows to treat the finite-dimensional modular and infinite-dimensional
cases in a uniform way.

\begin{theorem}
Let $A$ be an associative commutative algebra with unit over a field $K$ of characteristic
$\ne 2,3$, and $L$ be a Lie 
subalgebra of $Der(A)$ which is simultaneously a free finite-dimensional $A$-submodule
of $Der(A)$, such that 
\begin{equation}\label{hom}
Hom_A(L,A) = A \cdot \set{d(a)}{a\in A} ,
\end{equation}
where, for each $a\in A$, the map $d(a): L \to A$ is defined by the rule $d(a)(D) = D(a)$ 
for $D\in L$. Then any invariant symmetric bilinear form on $L$ vanishes.
\end{theorem}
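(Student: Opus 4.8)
The plan is to translate the vanishing of $\omega$ into a homological statement and then to read that statement off from the $A$-module structure of $L$. First I would record the two elementary consequences of invariance together with symmetry: the \emph{associativity} identity $\omega([x,y],z)=\omega(x,[y,z])$, and the fact that the trilinear map $T_\omega(x,y,z)=\omega(x,[y,z])$ is alternating and is a Chevalley--Eilenberg $3$-cocycle with trivial coefficients (the classical Cartan form). Thus $\omega\mapsto T_\omega$ sends $SInv(L)=(S^2 L^*)^L$ into $Z^3(L,K)$, and its kernel consists of forms vanishing on $L\times[L,L]$, hence is zero once $L=[L,L]$. The coefficient $3$ that governs the coboundary of a symmetric $2$-cochain already flags the hypothesis $\mathrm{char}\,K\ne 3$, while $\mathrm{char}\,K\ne 2$ enters through the splitting of $2$-tensors into symmetric and skew parts. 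So it suffices to show that no nonzero closed form of this special shape survives, and the whole point is that freeness of $L$ over $A$ makes the ambient (co)homology computable.

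The engine is to evaluate a suitable second homology group in two ways. On one side, invariant forms are exactly $Hom_L(L,L^*)=(L^*\otimes L^*)^L$, living in degree zero; to see the symmetric ones inside a \emph{second} homology group one shifts degree by means of the de Rham-type complex attached to the $L$-equivariant derivation $d\colon A\to Hom_A(L,A)$, $a\mapsto d(a)$, the symmetric part being cut out by the associativity/Cartan-form condition above (in the finite-dimensional case this may alternatively be phrased through $H_2(L,L^*)\cong H^2(L,L)^*$, which is precisely the link to the deformation-theoretic variant mentioned in the abstract). On the other side, because $L\cong\bigoplus_i A\,\partial_i$ is $A$-free of finite rank and sits in $Der(A)$, the Chevalley--Eilenberg complex is quasi-isomorphic to a short Koszul/de~Rham-type complex built from the exterior powers $\Lambda^\bullet_A L$ and the $1$-form module $Hom_A(L,A)$. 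Here hypothesis \eqref{hom} is decisive: it asserts that $Hom_A(L,A)=A\cdot\set{d(a)}{a\in A}$ is generated over $A$ by the \emph{exact} differentials, so the cokernel of $d$ is annihilated and the corresponding term of the resolution collapses onto the image of $d$. Equating the two evaluations then forces the term carrying $SInv(L)$ to vanish.

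The main obstacle is the bookkeeping of the first step: setting up precisely which Lie algebra and which coefficient module to use so that the symmetric invariant forms emerge as a \emph{clean} summand, in the correct degree, of the second homology rather than as an uncontrolled subquotient, and then checking that \eqref{hom} annihilates exactly that summand. A secondary difficulty is to treat the two regimes uniformly: one must avoid any appeal to finite-dimensionality of $A$ or to Poincar\'e duality on $L$, using only the finite rank and $A$-freeness of $L$ together with \eqref{hom}, and one must track the characteristic hypotheses (the factor $3$ from the Cartan form, the factor $2$ from symmetrization) at each identification. Once the $SInv(L)$-term is identified with an exact-forms quotient that \eqref{hom} kills, the conclusion is immediate.
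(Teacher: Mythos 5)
You have correctly identified the genre of the paper's argument (evaluate a second homology group in two ways), but your proposal is missing its decisive idea and the steps you do supply would not work. The central gap: you try to locate the invariant symmetric forms --- equivalently their predual, the symmetric coinvariants $\mathcal B(L) = (L\vee L)/\langle [z,x]\vee y + x\vee [z,y]\rangle$ --- inside a (co)homology group of $L$ \emph{itself}, via a degree shift. But no such clean summand exists: $H_2(L,K)$ with trivial coefficients carries no trace of $\mathcal B(L)$, and the two identifications you gesture at both fail under your own ground rules. The duality $H_2(L,L^*)\cong H^2(L,L)^*$ requires finite-dimensionality, which you rightly forbid yourself; and the Cartan map $\omega\mapsto T_\omega$ lands in \emph{cocycles} $Z^3(L,K)$, but the induced map to $H^3(L,K)$ has no reason to be injective ($T_\omega$ may be a coboundary), so even a full computation of $H^3$ would not force $\omega=0$. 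The paper's device for making $\mathcal B(L)$ visible in degree $2$ is to pass to a \emph{current} Lie algebra: for perfect $L$ and any commutative unital $B$, by \cite[Theorem 0.1]{asterisque} one has $H_2(L\otimes B,K)\simeq \bigl(H_2(L,K)\otimes B\bigr)\oplus\bigl(\mathcal B(L)\otimes HC_1(B)\bigr)$, and one then chooses $B$ with $HC_1(B)\ne 0$ (e.g. $K1\oplus N$ with $N$ a trivial algebra of dimension $>1$, or $K[x]/(x^p)$ in characteristic $p$). Without this auxiliary $B$ and the cyclic-homology term, your ``two evaluations of a suitable second homology'' never get off the ground, because you have no group in which $SInv(L)$ sits as a controlled summand.

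The role you assign to hypothesis (\ref{hom}) is also not the one it actually plays. It does not ``annihilate the exact-forms quotient'' of a hand-built Koszul complex computing the whole Chevalley--Eilenberg cohomology (freeness of $L$ over $A$ does not make $C^\bullet(L,K)$ quasi-isomorphic to a short $\Lambda^\bullet_A L$-complex); rather, (\ref{hom}) is precisely the hypothesis of Skryabin's theorem \cite[Theorem 7.1]{skr}, which gives $H_2(L,K)\simeq H^1(C_A^\bullet(L,A))$ when $\dim_A L=1$ and $H_2(L,K)=0$ when $\dim_A L>1$. The actual proof then needs two checks you do not have: that the pair $(L\otimes B, A\otimes B)$ again satisfies (\ref{hom}) (via $Hom_{A\otimes B}(L\otimes B,A\otimes B)\simeq Hom_A(L,A)\otimes B$), so Skryabin's theorem applies to the current algebra as well, and the base-change lemma $H^n(C_{A\otimes B}^\bullet(L\otimes B,A\otimes B))\simeq H^n(C_A^\bullet(L,A))\otimes B$. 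Comparing the two functorial computations of $H_2(L\otimes B,K)$ kills $\mathcal B(L)\otimes HC_1(B)$, hence $\mathcal B(L)=0$. Two smaller points: perfectness $[L,L]=L$, needed for the current-algebra formula, must be proved (it follows from $[aD,D]=D(a)D$ plus freeness), and the hypothesis $\mathrm{char}\,K\ne 2,3$ enters through the quoted structural results, not through your factor of $3$ from the Cartan form.
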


Here and below, $Der(A)$ denotes the Lie algebra of derivations of an algebra $A$.

\begin{proof}
Let $B$ be another associative commutative algebra with unit over $K$.
We have obvious embeddings 
$$
L \simeq L \otimes 1 \subseteq Der(A) \otimes B \subseteq Der(A\otimes B) .
$$
Define another $(A\otimes B)$-submodule $\mathscr L$ of $Der(A\otimes B)$ as 
$\mathscr L = (A\otimes B) \cdot (L \otimes 1)$.
We have $\mathscr L = (A \cdot L) \otimes B = L \otimes B$, so $\mathscr L$
forms a Lie algebra, with Lie brackets defined as
$$
[D_1 \otimes b_1, D_2 \otimes b_2] = [D_1,D_2] \otimes b_1b_2
$$
for $D_1,D_2 \in L$, $b_1,b_2 \in B$.

Obviously, $\mathscr L$ is a free $A\otimes B$-module, and 
\begin{multline*}
Hom_{A\otimes B} (\mathscr L, A\otimes B) =
Hom_{A\otimes B} (L \otimes B, A \otimes B) \simeq Hom_A(L,A) \otimes B \\ =
A \cdot \set{d(a)}{a\in A} \otimes B = 
(A\otimes B) \cdot \langle d(a\otimes b) \>|\> a\in A, b\in B \rangle ,
\end{multline*}
i.e. condition (\ref{hom}) is satisfied for the pair $(\mathscr L, A\otimes B)$ too.

Now we are in position to apply to both algebras $L$ and $\mathscr L$ 
Theorem 7.1 of \cite{skr} which tells that $H_2(L,K)$ 
(the second homology of $L$ with trivial coefficients) 
is isomorphic to the first cohomology $H^1(C_A^\bullet(L,A))$ of the 
corresponding generalized de Rham complex $C_A^\bullet(L,A)$ if $\dim_A L = 1$,
and vanishes if $\dim_A L > 1$. 
Similarly, $H_2(\mathscr L, K)$ is isomorphic to 
$H^1(C_{A\otimes B}^\bullet (\mathscr L, A\otimes B))$
if $\dim_{A\otimes B} \mathscr L = 1$
and vanishes if $\dim_{A\otimes B} \mathscr L > 1$.
Note that $\dim_{A\otimes B} \mathscr L = \dim_{A\otimes B} L\otimes B = \dim_A L$.

The complex $C_A^\bullet(L,A)$ consists of $A$-multilinear Chevalley--Eilenberg
cochains $C^\bullet(L,A)$, with differential defined as in the standard Chevalley--Eilenberg 
complex. Though this and similar complexes were extensively studied in relation 
with such structures as Lie--Rinehart algebras, Lie algebroids, etc., we failed to find 
the following simple statement explicitly in the literature.

\begin{lemma}
$H^n(C_{A\otimes B}^\bullet(L\otimes B, A\otimes B)) \simeq H^n(C_A^\bullet(L,A)) \otimes B$
for any $n\in \mathbb N$.
\end{lemma}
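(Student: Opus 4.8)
The plan is to establish the lemma at the level of cochain complexes and then deduce the cohomology statement by flatness. Concretely, I will show that the whole complex $C_{A\otimes B}^\bullet(L\otimes B, A\otimes B)$ is isomorphic, as a cochain complex of $K$-vector spaces, to $C_A^\bullet(L,A)\otimes B$; since $B$ is free, hence flat, over the field $K$, cohomology commutes with $\otimes B$ and the conclusion follows.

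First I would identify the cochain modules in each degree. Writing $L\otimes B = L\otimes_A(A\otimes B)$, the base-change formula for exterior powers gives $\Lambda^n_{A\otimes B}(L\otimes B)\simeq (\Lambda^n_A L)\otimes_A(A\otimes B)\simeq (\Lambda^n_A L)\otimes B$. Because $L$ is a free $A$-module of finite rank, each $\Lambda^n_A L$ is a finitely generated projective (indeed free) $A$-module, so $Hom$ out of it commutes with the base change $A\to A\otimes B$:
$$
C^n_{A\otimes B}(L\otimes B, A\otimes B) = Hom_{A\otimes B}\bigl(\Lambda^n_{A\otimes B}(L\otimes B),\, A\otimes B\bigr) \simeq Hom_A(\Lambda^n_A L, A)\otimes B = C^n_A(L,A)\otimes B .
$$
Here the finiteness and freeness of $L$ are essential: they are exactly what makes the interchange of $Hom_A$ with base change valid.

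Next I would verify that this degreewise identification respects the Chevalley--Eilenberg differentials. It suffices to evaluate $d$ on pure tensors $\phi\otimes b$ and on arguments of the form $D_0\otimes 1,\dots,D_n\otimes 1$, since such arguments generate the cochains by $(A\otimes B)$-multilinearity. Using that $D\otimes 1$ acts on $A\otimes B$ by $a\otimes b'\mapsto D(a)\otimes b'$ and that $[D_i\otimes 1, D_j\otimes 1]=[D_i,D_j]\otimes 1$, a direct substitution into the Chevalley--Eilenberg formula yields
$$
\bigl(d(\phi\otimes b)\bigr)(D_0\otimes 1,\dots,D_n\otimes 1) = (d\phi)(D_0,\dots,D_n)\otimes b ,
$$
so the differential on the left corresponds to $d\otimes \mathrm{id}_B$ on the right. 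Thus $C_{A\otimes B}^\bullet(L\otimes B, A\otimes B)\simeq C_A^\bullet(L,A)\otimes B$ as cochain complexes.

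Finally, since $B$ is a vector space over $K$ it is flat, so taking cohomology commutes with $\otimes_K B$, giving $H^n\bigl(C_A^\bullet(L,A)\otimes B\bigr)\simeq H^n(C_A^\bullet(L,A))\otimes B$, which is the assertion. The only delicate point is the cochain-level isomorphism, specifically the interchange of $Hom_A$ with base change, which rests entirely on $L$ being free of finite rank; the differential compatibility is a routine check, and the passage to cohomology is automatic over a field.
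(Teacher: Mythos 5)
Your proposal is correct and follows essentially the same route as the paper: both establish a cochain-level isomorphism $C_{A\otimes B}^\bullet(L\otimes B, A\otimes B) \simeq C_A^\bullet(L,A)\otimes B$ compatible with the Chevalley--Eilenberg differentials, and then pass to cohomology, which is automatic since $\otimes_K B$ is exact over the field $K$. The differences are only in implementation: where the paper identifies the cochain spaces by explicitly describing $Hom_B(B^{\otimes n},B)\simeq B$ and verifies the differential by a full computation on general decomposable arguments $D_i\otimes b_i$, you obtain the same identification via standard base-change isomorphisms for $\Lambda^n$ and $Hom$ (usefully making explicit that the finite freeness of $L$ over $A$ is what justifies interchanging $Hom_A$ with base change, a point the paper uses tacitly), and you shorten the differential check to arguments in $L\otimes 1$ --- a valid reduction by $(A\otimes B)$-multilinearity, given that the generalized de Rham differential preserves $(A\otimes B)$-multilinear cochains, which is part of the setup of the complex cited from Skryabin.
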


\begin{proof}
We have: 
\begin{multline*}
Hom_{A\otimes B} ((L\otimes B)^{\otimes n}, A\otimes B) \simeq 
Hom_{A\otimes B} (L^{\otimes n} \otimes B^{\otimes n}, A\otimes B) \\ \simeq 
Hom_A (L^{\otimes n}, A) \otimes Hom_B (B^{\otimes n}, B).
\end{multline*}

It is easy to see that $Hom_B (B^{\otimes n}, B)$ consists exactly of the linear span of maps 
of the form 
$$
b_1 \otimes \dots \otimes b_n \mapsto b_1 \dots b_n b
$$
for $b_1, \dots, b_n\in B$ and some fixed $b\in B$, and hence is isomorphic to $B$.

Passing to the skew-symmetric cochains, we have
\begin{equation}\label{isom}
C_{A\otimes B}^n (L\otimes B, A\otimes B) \simeq C_A^n (L,A) \otimes B ,
\end{equation}
all $(A\otimes B)$-multilinear cochains on the left-hand side being the linear span of maps of the form 
\begin{equation}\label{yoyo}
(D_1\otimes b_1) \wedge \dots \wedge (D_n\otimes b_n) \mapsto 
\varphi(D_1, \dots, D_n) \otimes b_1 \dots b_nb ,
\end{equation}
for certain $\varphi \in C_A^n(L,A), b\in B$, and where 
$D_1, \dots, D_n \in L, b_1, ..., b_n \in B$.

Let us see how the Chevalley--Eilenberg differential $d_{L\otimes B}$ interacts with
isomorphism (\ref{isom}).
Let $\Phi \in C_{A\otimes B}^n (L\otimes B, A\otimes B)$ be determined by the formula 
(\ref{yoyo}).
Then:
\begin{align*}
d_{L\otimes B} & \Phi (D_1 \otimes b_1, \dots, D_{n+1} \otimes b_{n+1}) 
\\ &= 
\>\>\>\>\> \sum_{i=1}^{n+1} \>\>\>\>\>\>\>\> (-1)^{i+1} (D_i \otimes b_i) 
\Phi(D_1 \otimes b_1, \dots, \widehat{D_i \otimes b_i}, \dots, D_{n+1} \otimes b_{n+1}) \\ &+
\sum_{1 \le i < j \le n+1} (-1)^{i+j}
\Phi ([D_i \otimes b_i, D_j \otimes b_j], D_1 \otimes b_1, \dots, \widehat{D_i \otimes b_i},
\dots, \widehat{D_j \otimes b_j}, \\ & \qquad\qquad\qquad\qquad\qquad\> \dots, D_{n+1} \otimes b_{n+1})
\\ &=
\>\>\>\>\> \sum_{i=1}^{n+1} \>\>\>\>\>\>\>\> (-1)^{i+1} 
D_i \varphi(D_1, \dots, \widehat{D_i}, \dots, D_{n+1}) \otimes 
b_i b_1 \dots \widehat{b_i} \dots b_{n+1} b \\ 
&+
\sum_{1 \le i < j \le n+1} (-1)^{i+j}
\varphi ([D_i,D_j], D_1, \dots, \widehat{D_i}, \dots, \widehat{D_j}, \dots, D_{n+1}) 
\\ & \qquad\qquad\qquad\qquad\quad \otimes b_i b_j b_1 \dots \widehat{b_i} \dots \widehat{b_j} \dots b_{n+1} b 
\\ & \qquad\qquad\qquad\qquad\qquad\qquad\qquad\qquad =
d_L \varphi(D_1, \dots, D_{n+1}) \otimes b_1 \dots b_{n+1} b \>,
\end{align*}
where $d_L$ is the Chevalley--Eilenberg differential in the complex $C_A^\bullet (L,A)$,
and the statement of the lemma follows.
\end{proof}

\noindent\textit{Continuation of the proof of Theorem}.
On the other hand, the equality $[aD,D] = D(a)D$ for any $D\in L, a\in A$, together
with the freeness of $L$ over $A$, implies that $[L,L] = L$, 
hence $[\mathscr L, \mathscr L] = \mathscr L$, and by \cite[Theorem 0.1]{asterisque}
\begin{equation}\label{h2}
H_2(\mathscr L, K) = H_2(L\otimes B,K) \simeq
\Big( H_2(L,K) \otimes B \Big) \oplus \Big( \mathcal B(L)\otimes HC_1(B) \Big) ,
\end{equation}
where $\mathcal B(L)$ denotes the space of symmetric coinvariants 
$$
\frac{L \vee L}{\set{[z,x] \vee y + x \vee [z,y]}{x,y,z \in L}} ,
$$
and $HC_1(B)$ denotes the first cyclic homology of $B$.
The space of invariant symmetric bilinear forms is dual to $\mathcal B(L)$,
so it is sufficient to prove the vanishing of $\mathcal B(L)$.

If $\dim_A L = 1$, then 
\begin{equation}\label{yaya}
H_2(\mathscr L, K) \simeq H^1(C_{A\otimes B}^\bullet (L\otimes B, A\otimes B)) \simeq
H^1(C_A^\bullet (L,A)) \otimes B \simeq H_2(L,K) \otimes B ,
\end{equation}
where the second isomorphism follows from the lemma.
It is obvious that this isomorphism is functorial, and from the proof of Theorem 0.1
in \cite{asterisque} it follows that the isomorphism (\ref{h2}) is functorial, too.
The comparison of (\ref{h2}) and (\ref{yaya}) entails the vanishing of 
$\mathcal B(L) \otimes HC_1(B)$.

If $\dim_A L > 1$, the whole expression at the right side of (\ref{h2}) vanishes, and, 
in particular, $\mathcal B(L) \otimes HC_1(B)$ vanishes. 

It remains to pick an algebra $B$ such that $HC_1(B)$ does not vanish: for example,
$B = K1 \oplus N$, where $N$ is an algebra with trivial multiplication of dimension $>1$.
Elementary calculation shows that $HC_1(B) \simeq N \wedge N$. Another example, relevant
to modular Lie algebras, is $B = K[x]/(x^p)$, where $p>0$ is the characteristic of the ground
field $K$. Again, either elementary calculation, or reference to 
\cite[Corollary 5.4.17]{loday} shows that $\dim HC_1(K[x]/(x^p)) = 1$.
\end{proof}

\begin{corollary}[\protect{\cite[Theorem 1]{dzhu}, \cite[Corollary in \S2]{dzhu-izv}
and \cite[Theorem 4.2]{farnsteiner}}]\label{cor-1}
Any invariant symmetric bilinear form on a finite-dimensional simple generalized
Jacobson--Witt algebra over a field of characteristic $>3$, vanishes.
\end{corollary}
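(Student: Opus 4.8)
The plan is to realize the generalized Jacobson--Witt algebra $W_n(\overline m)$ as an instance of the Lie algebra $L$ appearing in the Theorem, and then simply invoke the Theorem. First I would recall the standard realization. Let $A = O_n(\overline m)$ be the divided power algebra in $n$ variables attached to the multi-index $\overline m = (m_1,\dots,m_n)$, spanned over $K$ by the divided powers $x^{(\alpha)}$ with $0 \le \alpha_i < p^{m_i}$; in the restricted case $\overline m = (1,\dots,1)$ this is just the truncated polynomial algebra $K[x_1,\dots,x_n]/(x_1^p,\dots,x_n^p)$. This is a finite-dimensional commutative associative unital $K$-algebra. By definition, $W_n(\overline m)$ is the Lie algebra of special derivations of $A$, namely $L = \bigoplus_{i=1}^n A\,\partial_i$, where $\partial_j(x^{(\alpha)}) = x^{(\alpha - \epsilon_j)}$ and $\epsilon_j$ is the $j$-th unit multi-index. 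It is a Lie subalgebra of $Der(A)$ and a free $A$-module of rank $n$ with basis $\partial_1,\dots,\partial_n$.

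Next I would verify the hypotheses of the Theorem one at a time. The assumption $\mathrm{char}\,K > 3$ (hence $K$ has a prime characteristic $p \ge 5$) gives $\mathrm{char}\,K \ne 2,3$; finite-dimensionality of $A$ gives finite-dimensionality of $L$ over $K$; and freeness of rank $n$ has just been recorded. The only point requiring a computation is condition (\ref{hom}). Since $L$ is free over $A$ with basis $\partial_1,\dots,\partial_n$, the dual module $Hom_A(L,A)$ is free of rank $n$ with dual basis $\omega_1,\dots,\omega_n$ characterized by $\omega_i(\partial_j) = \delta_{ij}$. I would then observe that $d(x_i)(\partial_j) = \partial_j(x_i) = \partial_j(x^{(\epsilon_i)}) = \delta_{ij}$, so that $d(x_i) = \omega_i$. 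Thus the maps $d(a)$, $a\in A$, already contain the entire dual basis, and $A \cdot \set{d(a)}{a\in A} = \bigoplus_i A\,\omega_i = Hom_A(L,A)$, which establishes (\ref{hom}).

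With all hypotheses verified, the Theorem applies directly and yields the vanishing of every invariant symmetric bilinear form on $L = W_n(\overline m)$. The word ``simple'' in the statement merely delimits the class of algebras of interest --- over a field of characteristic $>3$ the generalized Jacobson--Witt algebras are indeed simple --- and is not itself used in the argument. The main (and quite mild) obstacle I anticipate is bookkeeping with the divided power formalism: one must pin down the correct convention for the special derivations $\partial_j$, and keep in mind that for general $\overline m$ the algebra $L$ is a proper Lie subalgebra of the full $Der(A)$ rather than all of it, while still being a free $A$-module of exactly the shape required. Once the identity $d(x_i) = \omega_i$ is in hand, condition (\ref{hom}) --- the genuinely algebra-specific input --- is immediate, and no further work is needed.
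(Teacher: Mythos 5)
Your proposal is correct and takes essentially the same route as the paper: the paper's proof likewise realizes $W_n(\overline m)$ as the Lie algebra of special derivations, freely generated over the divided powers algebra, and observes that condition (\ref{hom}) is verified immediately. Your explicit check $d(x_i) = \omega_i$ simply spells out the detail that the paper leaves to the reader.
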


\begin{proof}
It is well-known that such Lie algebras satisfy
the condition of the theorem, being freely generated by the special
derivations over a divided powers algebra (see, for example, \cite[\S 4.2]{fs}).
The condition (\ref{hom}) is verified immediately.
\end{proof}

\begin{corollary}[\protect{partially implicit in \cite[\S 1]{dzhu-inf}}]\label{cor2}
Any invariant symmetric bilinear form on an infinite-dimensional simple one-sided or
two-sided Witt algebra over a field of characteristic zero, vanishes.
\end{corollary}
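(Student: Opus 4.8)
The plan is to realize each of these Lie algebras as $Der(A)$ for a suitable associative commutative unital $K$-algebra $A$ and then verify that the hypotheses of the Theorem hold, so that the conclusion follows at once. For the two-sided Witt algebra I would take $A = K[x,x^{-1}]$, the algebra of Laurent polynomials, and for the one-sided Witt algebra $A = K[x]$, the polynomial algebra; in both cases $L = Der(A)$ and, $K$ having characteristic zero, the restriction $\mathrm{char}\, K \ne 2,3$ is automatic.

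Next I would check that $L$ is a free $A$-module of rank $1$. Since a derivation of $K[x]$ (respectively of $K[x,x^{-1}]$) is determined by its value on the generator $x$, and since any derivation extends uniquely along the localization $K[x] \to K[x,x^{-1}]$, every element of $Der(A)$ has the form $f\partial$ with $f \in A$ and $\partial = d/dx$. Hence $L = A\cdot \partial$ is free of rank $1$; in particular $\dim_A L = 1$, which is the finite-rank hypothesis demanded by the Theorem. I would stress here that this is a condition on the rank over $A$, not on the dimension over $K$, so the Theorem genuinely applies to these $K$-infinite-dimensional algebras, and we land in its $\dim_A L = 1$ case.

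It then remains to verify condition (\ref{hom}). Because $L$ is free of rank $1$ on $\partial$, the dual module $Hom_A(L,A)$ is free of rank $1$ on the dual basis element, and the map $d(x)$ satisfies $d(x)(\partial) = \partial(x) = 1$, so $d(x)$ is exactly this dual generator. Thus $Hom_A(L,A) = A\cdot d(x)$, and since every $d(a)$ already lies in $Hom_A(L,A)$, the two sides of (\ref{hom}) coincide. Applying the Theorem forces every invariant symmetric bilinear form on $L$ to vanish.

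The computations are entirely routine, so there is no serious obstacle; the only point requiring a little care is the claim that these derivation algebras are free of rank $1$ and exhaust the respective Witt algebras — in particular, confirming for the two-sided case that passing to Laurent polynomials introduces no derivations beyond $A\cdot\partial$. I would note that simplicity of the algebras, though recorded in the statement, plays no role in the argument, since the Theorem imposes only the module-theoretic conditions.
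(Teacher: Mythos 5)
Your method is exactly the paper's: the paper's entire proof of this corollary is the one-line observation that the Witt algebra is a free module over the underlying commutative algebra and that condition (\ref{hom}) is immediate. Your verification of this for $A = K[x]$ and $A = K[x,x^{-1}]$ is correct in every detail, including the localization argument showing $Der(K[x,x^{-1}]) = K[x,x^{-1}]\cdot\partial$, the identification of $d(x)$ with the dual generator of $Hom_A(L,A)$, and the (correct) remark that simplicity plays no role.

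There is, however, a gap in coverage rather than in method: the corollary, as the paper defines its terms in the paragraph following the statement, concerns derivation algebras of polynomial-like algebras \emph{in a finite number of variables}, and moreover includes the power series and Laurent power series versions alongside the polynomial and Laurent polynomial ones. Your proof treats only the one-variable polynomial and Laurent polynomial cases. The extension is routine but should be stated: for $A$ any of $K[x_1,\dots,x_n]$, $K[[x_1,\dots,x_n]]$, $K[x_1^{\pm 1},\dots,x_n^{\pm 1}]$, or the Laurent series analog, one has $L = Der(A) = \bigoplus_{i=1}^n A\,\partial_i$, free of rank $n$, and $Hom_A(L,A)$ is free on the dual basis, which coincides with $d(x_1),\dots,d(x_n)$ since $d(x_i)(\partial_j) = \delta_{ij}$; thus (\ref{hom}) holds and the Theorem applies, with the $\dim_A L > 1$ branch now in play rather than only the $\dim_A L = 1$ branch you emphasize. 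For the power series cases one must additionally know that \emph{every} derivation has the form $\sum_i f_i \partial_i$ with $f_i \in A$; this follows because any derivation $D$ of $K[[x_1,\dots,x_n]]$ satisfies $D(\mathfrak{m}^k) \subseteq \mathfrak{m}^{k-1}$ for the maximal ideal $\mathfrak{m}$, hence is continuous in the $\mathfrak{m}$-adic topology and determined by its values on the $x_i$ -- the analog of your "determined by its value on the generator" step, which in the formal series setting is not entirely automatic. With these additions your argument covers the full statement.
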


Here, by a Witt algebra we mean the Lie algebra of all derivations of a certain
associative commutative polynomial-like algebra in a finite number of variables,
which is the algebra of ordinary polynomials or power series in the case of the one-sided 
Witt algebra, and the algebra of Laurent polynomials or Laurent power series
in the case of the two-sided Witt algebra. This includes, among others, the Lie algebra of smooth
vector fields on the circle which is prominent in topology, physics, and other areas.

\begin{proof}[Proof of Corollary \ref{cor2}]
Analogously, any such Lie algebra is a free module over the respective commutative
associative algebra, and the condition (\ref{hom}) is obvious.
\end{proof}

Unfortunately, the proof of the theorem and its corollaries cannot be easily extended
to other types of Lie algebras of Cartan type (where it is applicable). 
Our proof is based on two results about the second homology of some kinds of
algebras. The first one -- about current Lie algebras from \cite{asterisque} -- holds 
for arbitrary Lie algebras, but the second one -- about Lie algebras of derivations from
\cite{skr} -- holds only for particular type of algebras, what stipulates the conditions
imposed in the theorem.
It is possible that some of Skryabin's theory could be developed for analogs
of Lie algebras of other Cartan types, but such extension appears to be far from 
obvious: one could easily observe that the condition 
of freeness of $L$ as an $A$-module, which is violated for
Lie algebras of Cartan types other than Jacobson--Witt algebras, is crucial there.

However, even with these results at hand, one may try to pursue a different, more 
sophisticated homological approach. Let $L$ be a simple Lie algebra
of Cartan type embedded into the Jacobson--Witt algebra $W_n(\overline m)$. Then, for any 
associative commutative algebra with unit $B$, 
$L\otimes B$ is a subalgebra of $W_n(\overline m) \otimes B$, and we may consider the 
corresponding Hochschild--Serre spectral sequence abutting to the homology 
$H_*(W_n(\overline m) \otimes B, K)$. In particular, some quotient of the term
$$
E_{02}^1 = H_2(L\otimes B, K) \simeq 
\Big( H_2(L,K) \otimes B \Big) \oplus \Big( \mathcal B(L) \otimes HC_1(B) \Big)
$$
contributes to 
$$
H_2(W_n(\overline m) \otimes B, K) \simeq 
\Big( H_2(W_n(\overline m),K) \otimes B \Big) \oplus 
\Big( \mathcal B(W_n(\overline m)) \otimes HC_1(B) \Big)
$$
(we used the formula (\ref{h2}) here).
One may show that under some additional conditions on the embedding 
$L \hookrightarrow W_n(\overline m)$ and on $B$, the relevant differentials of low 
degree in the Hochschild-Serre spectral sequence interplay well with these tensor 
product structures, and, moreover, the relevant part of $E_{02}^1$ is preserved in 
such a way that $\mathcal B(L) \otimes HC_1(B)$ maps injectively to 
$\mathcal B(W_n(\overline m)) \otimes HC_1(B)$. But since the latter vanishes, the former 
vanishes too.

However, this approach requires quite elaborated work, which does not seem 
warranted for a modest goal pursued in this note -- to give a simple alternative
proof of an already established result.
We hope to return to it, nevertheless, in another context later.

At the end, let us outline a yet another elementary approach, peculiar to simple Lie 
algebras of contact type $K_{2n+1}(m_1, \dots, m_n)$ (this approach may work also for determining other invariants of such 
algebras, such as the second cohomology with trivial coefficients). According to 
\cite[Proposition 2 on p. 263]{kostrikin-sh}\footnote{
Strictly speaking, in \cite{kostrikin-sh} the authors treat the restricted case only and remark that
the general case is similar. The general case can be found, for example, in 
\cite[pp. 2949--2950]{kuzn}.
},
any such Lie algebra can be considered as a certain filtered deformation of the 
following semidirect sum of Lie algebras:
\begin{equation}\label{s}
\Big( H_{n-1}(m_1,\dots,m_{n-1}) \otimes O_1(m_n)\Big) \inplus W_1(m_n) ,
\end{equation}
where $H_{n-1}(m_1,\dots,m_{n-1})$ is the Lie algebra of Hamiltonian type, $O_1(m_n)$ is
the reduced polynomial algebra, and $W_1(m_n)$ acts on the tensor product in a certain complicated way.

Applying: elementary considerations about invariant symmetric bilinear forms on the
semidirect sum of Lie algebras; \cite[Theorem 4.1]{asterisque} expressing
the space of symmetric bilinear forms on the current Lie algebra $L \otimes A$
in terms of its tensor components; Corollary \ref{cor-1}; and the well-known fact that 
modular Lie algebras of Hamiltonian type have a single, up to scalar, 
invariant symmetric bilinear form (see the above-cited works), one can prove that 
the space of invariant symmetric bilinear forms on Lie algebras of kind (\ref{s}) is 
$1$-dimensional, explicitly writing down the single, up to scalar, form 
$\bform$.

Knowing the invariant symmetric bilinear forms on a Lie algebra, it is possible
to determine such forms on its deformation. Indeed, writing the deformed
bracket, as usual,
as
$$
\{x,y\} = [x,y] + \varphi_1(x,y)t + \varphi_2(x,y)t^2 + \dots 
$$
one sees that the condition of invariance of the form
$$
(x,y) = (x,y)_0 + (x,y)_1 t + (x,y)_2 t^2 + \dots
$$
with respect to multiplication $\curlybrack$ is equivalent to the series of equalities 
$$
([z,x],y)_n + (x,[z,y])_n + \sum_{\substack{i+j=n-1 \\ i,j\ge 0}} (\varphi_i(z,x),y)_j + (x,\varphi_i(z,y))_j 
= 0, \quad n=0,1,2,\dots\footnote{
Added April 27, 2017: this formula is (slightly) incorrect. The correct formula
is
$$
\sum_{\substack{i+j=n \\ i,j\ge 0}} (\varphi_i(z,x),y)_j + (x,\varphi_i(z,y))_j 
= 0, \quad n=0,1,2,\dots ,
$$
where $\varphi_0(x,y) = [x,y]$. I am grateful to Andrey Krutov for pointing this
out.
}
$$
The $0$th of these equalities says that $\form_0$ is an invariant symmetric bilinear form
on $L$ with multiplication $\liebrack$, while the subsequent ones can be interpreted
as obstructions to prolongation of $\form_0$ to $\form$. Utilizing the concrete 
structure of the deformation of (\ref{s}) presented in \cite{kostrikin-sh} and \cite{kuzn}, one can arrive,
via straightforward simple calculations, to the known result that the form $\bform$
may be prolonged to $K_{2n+1}(\overline m)$ if and only if $2n+1 \equiv -5 \pmod p$.

\medskip

Thanks are due to the anonymous referee for useful comments which improved the readability
of this note, and to Rolf Farnsteiner 
for kindly supplying a reprint of his paper \cite{farnsteiner}.

\end{document}